\newcounter{geqncount} %
{\setcounter{equation}{\value{geqncount}}}
\newtheorem{theo}{Theorem}
\newtheorem{coro}{Corollary}
\newtheorem{lemma}{Lemma}
\newcommand{\beq}{\begin{equation}} \newcommand{\eeq}{\end{equation}}
\newcommand{\beqr}{\begin{eqnarray}}
\newcommand{\eeqr}{\end{eqnarray}}
\newcommand{\beqrn}{\begin{eqnarray*}}
\newcommand{\eeqrn}{\end{eqnarray*}} \newcommand{\brr}{\begin{array}}
\newcommand{\err}{\end{array}}
\newcommand{\bseq}{\begin{subequations}}
\newcommand{\eseq}{\end{subequations}}
\newcommand{\bef}{\begin{figure}} \newcommand{\eef}{\end{figure}}
\newcommand{\bec}{\begin{center}} \newcommand{\eec}{\end{center}}
\title{Interlacing and asymptotic properties of Stieltjes polynomials}
\author{A. Bourget and T. McMillen \\[.1in]
Department of Mathematics,
California State University at Fullerton \\
McCarthy Hall 154, Fullerton, CA  92834 \\
{\small {\tt abourget@fullerton.edu, tmcmillen@fullerton.edu}}
}
\begin{document}

\maketitle

\bibliographystyle{plain}

%\hrule

\begin{abstract}

\noindent Polynomial solutions to the generalized Lam\'e equation, the \textit{Stieltjes polynomials}, and the associated
\textit{Van Vleck polynomials} have been studied since the 1830's, beginning with Lam\'e in his studies of the Laplace equation on an ellipsoid, and in an ever widening variety of applications since.  
%Recently there has been renewed interest in the distribution of the zeros of Heine-Stieltjes and Van Vleck polynomials as the degree of the corresponding Heine-Stieltjes polynomials increases.  
In this paper we show how the zeros of Stieltjes polynomials are distributed and 
present two new interlacing theorems.  We arrange the Stieltjes polynomials according to their Van Vleck zeros and show,  firstly, that the zeros of successive Stieltjes polynomials of the same degree interlace, and secondly, that the zeros of Stieltjes polynomials of successive degrees interlace.  We use these results to deduce new asymptotic properties of Stieltjes and Van Vleck polynomials.  We also show that no sequence of Stieltjes polynomials is orthogonal.

%\vskip 10pt \hrule \vskip 10pt
\vskip12pt \noindent KEYWORDS: \ Lam\'e equation,  Interlacing zeros, Heine-Stieltjes polynomials, Van Vleck polynomials, Orthogonal polynomials

\end{abstract}

\section{Introduction and main results}
\label{s1}

Let $\alpha_1< \cdots <\alpha_p$ be any $p$ distinct real numbers, and let $\rho_1,\dots ,\rho_p$ be positive
numbers. The \textit{generalized Lam\'e equation}\footnote{Usually the term generalized Lam\'e equation refers to such equations in which the $\alpha_j$'s are allowed to be complex.  But here we only consider the real case.} is the second order ODE given by
\begin{equation} 
\label{lame1}
  S''(x) +  \sum_{j=1}^p  \frac{\rho_j}{x-\alpha_j}
  \, S'(x) = \frac{V(x)}{A(x)} \, S(x)
\end{equation}
where $A(x)=\prod_{j=1}^p(x-\alpha_j)$ and $V(x)$ is a polynomial of degree $p-2$. A result of Stieltjes \cite{MR1554669}, known as the Heine-Stieiltjes Theorem \cite{sz75}, says that there
exist exactly $k+p-2\choose k$ polynomials $V(x)$  for which \eqref{lame1} has a polynomial solution
$S$ of degree $k$.  These polynomial solutions are often called \textit{Stieltjes} or \textit{Heine-Stieltjes polynomials}, and the
corresponding polynomials $V(x)$ are known as \textit{Van Vleck polynomials}.  

In this paper we will consider the case when $p=3$, in which case \eqref{lame1} is a Heun equation\footnote{The Heun equation is \eqref{lame1} with $p=3$ and where the $\rho_j$'s are allowed to be negative but must satisfy some other conditions (cf. \cite{MR1928260}).}, and the Heine-Stieltjes Theorem says that there are
 $k+1$ values of $\nu$ for which \eqref{lame1} has a polynomial solution of degree $k$ with $V(x)=\mu(x-\nu)$.  We refer to these values of $\nu$ as   the Van Vleck zeros of order $k$.

The equation \eqref{lame1} was studied by Lam\'e in the 1830's in the special case, $\rho_j=1/2$,
$\alpha_1+\alpha_2+\alpha_3 = 0$ in connection with the separation of variables in the Laplace equation using
elliptical coordinates \cite[Ch. 23]{whwa96}. The equation has since found a strikingly wide variety of other applications, from electrostatics \cite{MR1662694, MR2345246, dias00, mcboag09} to completely quantum integrable systems such as the quantum C. Neumann oscillators, the asymmetric top and the geodesic flow on an ellipsoid \cite{agbo07, grve08, bomc09, hawi95}.  In particular, the zeros of the Stieltjes polynomials may be nicely interpreted as the equilibrium positions of $k$ unit charges in a logarithmic potential in which at each position $\alpha_j$ in the complex plane is fixed a charge of magnitude $\rho_j$.

Much is known about the properties of Stieltjes and Van Vleck polynomials for a fixed degree of the  Stieltjes
polynomial (see, e.g. \cite{vo04} for recent results), but there are few results relating  Van Vleck and Stieltjes zeros of different degrees.  
A few important facts are that the zeros of any
Stieltjes polynomial are simple, lie inside the interval $(\alpha_1,\alpha_3)$, and none of them can equal
$\alpha_2$ or its corresponding Van Vleck zero.  Similarly, for fixed $k$, the Van Vleck zeros  are distinct and
also lie within $(\alpha_1,\alpha_3)$.  The proofs of these results can be found in  \cite{vv98, sz75, MR0230954}.

Recently we  showed in \cite{bomcva08} that the Van Vleck zeros of successive orders interlace.  That
is, if the Van Vleck zeros of order $k$ are written in increasing order as
 $\nu_1^{(k)} < \nu_2^{(k)} < \cdots < \nu_{k+1}^{(k)}$, then
\begin{equation}
\alpha_1 < 
\nu_{1}^{(k+1)} < \nu_1^{(k)} < \nu_{2}^{(k+1)} < \nu_2^{(k)} < \cdots < \nu_{k+1}^{(k)} < \nu_{k+2}^{(k+1)} < \alpha_3.
\label{inequality}
\end{equation}

Much of the research in the past several years has focused on the asymptotic properties of the zeros of Stieltjes and Van Vleck polynomials as the degree of the corresponding Stieltjes polynomials
tends toward infinity \cite{bosh08, bo08, MR1928260, mamaor05}.   Interlacing theorems such as the one above are interesting not only for what they tell us about the zeros for a finite degree, but they also help us to understand such asymptotic limits.  They are ``classical'' results in the sense of being statements about finite degree polynomials, and they are also a bridge to connect other classical results with asymptotic limits.  Our results below are further steps in this direction.

In this paper we present a theorem on the distribution of the Stieltjes zeros and two additional interlacing theorems.  For each positive integer $k$ we label the $k+1$ Stieltjes polynomials of degree $k$  according to their Van Vleck zeros, as $S_j^{(k)}(x)$.  That is, $S_j^{(k)}(x)$ is the polynomial of degree $k$ that satisfies
\begin{equation} \label{lame2}
 \left[\frac{d^2}{dx^2} +  \sum_{j=1}^3  \frac{\rho_j}{x-\alpha_j}
  \, \frac{d}{dx} 
  - \frac{\mu\left(x-\nu_j^{(k)}\right)}{A(x)} \right] S_j^{(k)}(x) = 0
\end{equation}

Our first result is a strengthening of a classical result of Stieltjes:
\begin{theo}
\label{lem1}
There are exactly $j-1$ zeros of $S_j^{(k)}$ in the interval $(\alpha_1,\alpha_2)$, and $k-j+1$ zeros of $S_j^{(k)}$ in the interval $(\alpha_2,\alpha_3)$.  Moreover, there are no zeros of $S_j^{(k)}$ between $\alpha_2$ and its corresponding Van Vleck zero $\nu_j^{(k)}$.
\end{theo}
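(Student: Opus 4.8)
The plan is to pass to the self-adjoint form of \eqref{lame2}. Writing $w(x)=\prod_{j=1}^{3}|x-\alpha_j|^{\rho_j}$, one has $w'/w=\sum_{j}\rho_j/(x-\alpha_j)$, so on any subinterval of $(\alpha_1,\alpha_3)$ avoiding the $\alpha_j$ the equation \eqref{lame2} becomes
\[
\bigl(w\,(S_j^{(k)})'\bigr)'=\mu\,g(x)\,w\,S_j^{(k)},\qquad g(x):=\frac{x-\nu_j^{(k)}}{A(x)}.
\]
A direct sign count of the three linear factors of $A$ shows that $g>0$ precisely on the open interval bounded by $\alpha_2$ and $\nu_j^{(k)}$ (in whichever order they occur) and $g<0$ on the rest of $(\alpha_1,\alpha_3)$. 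Two elementary facts drive everything: $w>0$ on the open interval, while $w(\alpha_2)=0$ because $\rho_2>0$; the latter forces $\lim_{x\to\alpha_2}w\,(S_j^{(k)})'=0$ since $(S_j^{(k)})'$ is bounded.

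For the gap statement I would argue by contradiction, say in the case $\nu_j^{(k)}>\alpha_2$. Let $x^{\ast}$ be the smallest zero of $S_j^{(k)}$ in $(\alpha_2,\nu_j^{(k)})$, should one exist, and let $\epsilon=\pm1$ be the sign of $S_j^{(k)}$ on $(\alpha_2,x^{\ast})$. Integrating the self-adjoint identity from $\alpha_2$ and using $\lim_{x\to\alpha_2}w\,(S_j^{(k)})'=0$ gives
\[
w(x)\,(S_j^{(k)})'(x)=\mu\int_{\alpha_2}^{x} g\,w\,S_j^{(k)}\,dt,
\]
the integral converging near $\alpha_2$ exactly because $\rho_2>0$. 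On $(\alpha_2,x^{\ast})$ the integrand has constant sign $\epsilon$ (as $g,w>0$ there), so $w(x^{\ast})(S_j^{(k)})'(x^{\ast})$, and hence $(S_j^{(k)})'(x^{\ast})$, has sign $\epsilon$. But at a simple zero where $S_j^{(k)}$ has sign $\epsilon$ just to the left, $(S_j^{(k)})'(x^{\ast})$ must have sign $-\epsilon$, a contradiction. The case $\nu_j^{(k)}<\alpha_2$ is symmetric, integrating leftward from $\alpha_2$. This establishes the last sentence of the theorem.

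For the counting statements I would combine the classical theorem of Stieltjes with a Sturm comparison. By that theorem the $k+1$ polynomials of degree $k$ realize each of the distributions ``$m$ zeros in $(\alpha_1,\alpha_2)$ and $k-m$ in $(\alpha_2,\alpha_3)$'', $m=0,\dots,k$, exactly once; thus the number of zeros in $(\alpha_2,\alpha_3)$ is a bijection from $\{S_1^{(k)},\dots,S_{k+1}^{(k)}\}$ onto $\{0,1,\dots,k\}$. It remains to see this bijection is order-reversing in $\nu$. Fix $\nu_i^{(k)}<\nu_j^{(k)}$; on $(\alpha_2,\alpha_3)$ we have $A<0$, so the self-adjoint coefficients satisfy $\mu(x-\nu_i^{(k)})/A<\mu(x-\nu_j^{(k)})/A$, making the equation for $S_i^{(k)}$ the more oscillatory one. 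Sturm's comparison theorem, applied on compact subintervals of the open interval $(\alpha_2,\alpha_3)$, then produces a zero of $S_i^{(k)}$ between consecutive zeros of $S_j^{(k)}$, so $S_i^{(k)}$ has at least as many zeros in $(\alpha_2,\alpha_3)$ as $S_j^{(k)}$. Hence the count in $(\alpha_2,\alpha_3)$ is non-increasing in $\nu$; being a bijection onto $\{0,\dots,k\}$, it is strictly decreasing, forcing exactly $k-j+1$ zeros of $S_j^{(k)}$ in $(\alpha_2,\alpha_3)$ and therefore $j-1$ in $(\alpha_1,\alpha_2)$.

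The main obstacle I anticipate is the rigorous use of Sturm comparison near the singular endpoints $\alpha_2,\alpha_3$, where $1/A$ blows up and $w$ vanishes. Since the $S_j^{(k)}$ are polynomials, smooth up to the endpoints and with simple interior zeros, this should be handled by running the Picone--Wronskian identity on $[\alpha_2+\delta,\alpha_3-\delta]$ and letting $\delta\to0$; the delicate bookkeeping is the strictness needed to exclude ties before invoking the bijection. A secondary technical point is the convergence of the boundary integral in the gap argument, which is precisely where the hypothesis $\rho_2>0$ is used.
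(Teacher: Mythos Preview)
Your counting argument is essentially the paper's: combine Stieltjes' classical enumeration with a Sturm-type comparison to force monotonicity of the zero count in one of the two subintervals. The paper packages the comparison step as a separate lemma (its Lemma~1), working on $(\alpha_1,\alpha_2)$ rather than $(\alpha_2,\alpha_3)$, but the mechanism is the same Wronskian identity with the weight $w=\prod|x-\alpha_j|^{\rho_j}$.

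There is one genuine slip in your write-up. From ``a zero of $S_i^{(k)}$ between consecutive zeros of $S_j^{(k)}$'' you conclude ``$S_i^{(k)}$ has at least as many zeros in $(\alpha_2,\alpha_3)$ as $S_j^{(k)}$''. That does not follow: if $S_j^{(k)}$ has $m$ zeros there, the interior comparison only yields $m-1$ zeros of $S_i^{(k)}$. What rescues the count is precisely the endpoint contribution you flag at the end: because $w(\alpha_2)=w(\alpha_3)=0$, integrating the Wronskian identity from $\alpha_2$ to the smallest zero of $S_j^{(k)}$ (and from the largest zero to $\alpha_3$) forces an additional zero of $S_i^{(k)}$ in each of those boundary gaps, giving $\geq m+1$ when $m\geq 1$. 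The paper carries this out explicitly. So your anticipated ``obstacle'' is not merely about strictness or excluding ties; it is the step that actually delivers the monotonicity. Once you make that explicit, your counting proof is complete and matches the paper's.

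For the gap statement (no zeros between $\alpha_2$ and $\nu_j^{(k)}$) you take a genuinely different route. The paper invokes a result of Shah --- between any zero of $S$ and $\nu$ lies either a zero of $S'$ or $\alpha_2$ --- and then argues that a zero in the forbidden interval would force two critical points of $S$ between consecutive simple zeros. Your argument is self-contained: integrate the self-adjoint form from $\alpha_2$, use $w(\alpha_2)=0$ and the integrability of $gw$ near $\alpha_2$ (exactly where $\rho_2>0$ enters), and read off a sign contradiction at the first zero. Your version avoids an external citation and makes the role of the hypothesis $\rho_2>0$ transparent; the paper's version is shorter on the page but leans on Shah. Both are correct.
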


Next we show that the zeros of successive Stieltjes polynomials of the same degree interlace.  In the third theorem we establish that the zeros of the $j$th Stieltjes polynomials of successive degrees interlace.  The interlacing properties are illustrated in Figure~\ref{figure1}.
\begin{theo}
\label{theo1}
The zeros of $S_j^{(k)}$ and $S_{j+1}^{(k)}$ interlace; between any two consecutive zeros of $S_j^{(k)}$ there is exactly one zero of $S_{j+1}^{(k)}$.  Moreover, the smallest zero of $S_{j+1}^{(k)}$ is less than the smallest zero of $S_j^{(k)}$.
That is, let $x_{i,j}^{(k)}$ be the zeros of the Stieltjes polynomial $S_j^{(k)}$, arranged in increasing order.    Then
\begin{equation}
\alpha_1 < x_{1,j+1}^{(k)} < x_{1,j}^{(k)} < x_{2,j+1}^{(k)} < x_{2,j}^{(k)} < \cdots < x_{k,j+1}^{(k)} < x_{k,j}^{(k)} < \alpha_3.
\end{equation}  
\end{theo}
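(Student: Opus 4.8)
The plan is to compare $S_j^{(k)}$ and $S_{j+1}^{(k)}$ by a Sturmian Wronskian argument, exploiting that they solve \eqref{lame2} with the same leading coefficient $\mu$ but with the two ordered Van Vleck zeros $\nu_j^{(k)}<\nu_{j+1}^{(k)}$. First I would put \eqref{lame2} in self-adjoint form: with the integrating factor $w(x)=\prod_{i=1}^3|x-\alpha_i|^{\rho_i}$, which is positive on $(\alpha_1,\alpha_2)\cup(\alpha_2,\alpha_3)$, each solution satisfies $(wS')'=\mu\,(x-\nu_j^{(k)})\,q\,S$, where $q(x)=w(x)/A(x)$. Writing $S=S_j^{(k)}$, $T=S_{j+1}^{(k)}$ and $W=w\,(S'T-ST')$, the two equations combine into the identity $W'=\mu\,(\nu_{j+1}^{(k)}-\nu_j^{(k)})\,q\,S\,T$. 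Comparing leading coefficients in \eqref{lame2} gives $\mu=k(k-1+\rho_1+\rho_2+\rho_3)>0$, and $\nu_{j+1}^{(k)}-\nu_j^{(k)}>0$, so the sign of $W'$ is governed entirely by $q\,S\,T$.

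Two structural facts drive the argument. First, since $w\to 0$ at each of $\alpha_1,\alpha_2,\alpha_3$ while the polynomials and their derivatives remain bounded, $W$ vanishes at all three singular points. Second, $q$ is positive on $(\alpha_1,\alpha_2)$ and negative on $(\alpha_2,\alpha_3)$, because $A$ changes sign at $\alpha_2$; this sign flip is exactly what reverses the comparison across $\alpha_2$. I would also verify that $S$ and $T$ have no common zero, so that at a zero of one polynomial the other is nonzero; this is needed to keep every inequality strict.

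The core is a gap-by-gap comparison. On $(\alpha_1,\alpha_2)$, between two consecutive zeros $a<b$ of $S$ I integrate the identity to get $W(b)-W(a)=C\int_a^b qST\,dx$ with $C>0$ and $q>0$. If $T$ kept a fixed sign on $(a,b)$, then the right-hand integral would inherit the sign $\mathrm{sign}(S)\,\mathrm{sign}(T)$, whereas the boundary terms $W(a)=w(a)S'(a)T(a)$ and $W(b)=w(b)S'(b)T(b)$ yield $W(b)-W(a)$ of the opposite sign, since $S'(a)$ and $S'(b)$ have opposite signs; this contradiction forces a zero of $T$ in $(a,b)$. The same computation on the end gap $(\alpha_1,x_{1,j}^{(k)})$, using $W(\alpha_1)=0$ and that both polynomials have sign $(-1)^k$ to the left of all their zeros, produces a zero of $T$ below the smallest zero of $S$, giving the orientation $x_{1,j+1}^{(k)}<x_{1,j}^{(k)}$. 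By Theorem~\ref{lem1}, $T$ has exactly $j$ zeros and $S$ exactly $j-1$ zeros in $(\alpha_1,\alpha_2)$, so these $j$ forced zeros of $T$ fill the $j$ gaps of $S$ one apiece, producing the exact interlacing $x_{1,j+1}^{(k)}<x_{1,j}^{(k)}<\cdots<x_{j,j+1}^{(k)}<\alpha_2$. On $(\alpha_2,\alpha_3)$ the sign $q<0$ reverses the roles: now $S$ is the denser polynomial, with $k-j+1$ zeros against $k-j$ for $T$, and the identical analysis, together with $W(\alpha_2)=W(\alpha_3)=0$ and the sign $+$ of both polynomials to the right of all their zeros, gives $\alpha_2<x_{j,j}^{(k)}<x_{j+1,j+1}^{(k)}<\cdots<x_{k,j+1}^{(k)}<x_{k,j}^{(k)}<\alpha_3$.

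Concatenating the two chains yields the full interlacing, with $\alpha_2$ automatically sandwiched between $x_{j,j+1}^{(k)}$ and $x_{j,j}^{(k)}$. I expect the main obstacle to be precisely the behavior at $\alpha_2$: the comparison reverses direction there, so the clean ``$T$ to the left of $S$'' picture on $(\alpha_1,\alpha_2)$ must be matched against the reversed ``$S$ to the left of $T$'' picture on $(\alpha_2,\alpha_3)$, and one must check that no zero is lost or duplicated in the transition. This is controlled by the vanishing of $W$ at $\alpha_2$ together with the exact counts of Theorem~\ref{lem1}; the remaining care lies in the sign bookkeeping at the singular endpoints and in excluding common zeros so that every inequality is strict.
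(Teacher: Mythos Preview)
Your approach is correct and is essentially the same as the paper's: the paper isolates your Sturmian Wronskian argument (with the same integrating factor $J=w$ and the identity $(J(S_{j+1}'S_j-S_{j+1}S_j'))'=(\nu_j-\nu_{j+1})\frac{J}{A}\,S_jS_{j+1}$) as a separate lemma, and then combines it with the zero counts of Theorem~\ref{lem1} exactly as you do. The only cosmetic difference is that the paper states the comparison direction as ``a zero of $S_{j+1}$ between every zero of $S_j$ and either singular point'' rather than your end-gap language, but the content and the sign bookkeeping are identical.
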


\begin{theo}
\label{theo2}
The zeros of $S_j^{(k)}$ and $S_i^{(k+1)}$ interlace if and only if $i=j$ or $i=j+1$.
 That is, if $i=j$ or $i=j+1$ then
\begin{equation}
\alpha_1 < x_{1,i}^{(k+1)} < x_{1,j}^{(k)} < x_{2,i}^{(k+1)} < x_{2,j}^{(k)} < \cdots < x_{k,j}^{(k)} < x_{k+1,i}^{(k+1)} < \alpha_3.
\end{equation}  
Otherwise the zeros of $S_j^{(k)}$ and $S_i^{(k+1)}$ do not interlace.
\end{theo}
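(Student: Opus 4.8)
The plan is to prove the equivalence by establishing the two directions separately: the ``only if'' direction is a pure counting consequence of Theorem~\ref{lem1}, while the ``if'' direction requires a Wronskian comparison. For the ``only if'' direction, note first that since $S_j^{(k)}$ has degree $k$ and $S_i^{(k+1)}$ has degree $k+1$, the only way $k+1$ and $k$ distinct reals can strictly alternate is with the higher-degree zeros bracketing the lower-degree ones, i.e. exactly the displayed chain. In any initial segment of such an alternating list the number of zeros of $S_i^{(k+1)}$ exceeds the number of zeros of $S_j^{(k)}$ by either $0$ or $1$. Applying this to the segment lying to the left of $\alpha_2$ (a genuine initial segment, since every zero lies in $(\alpha_1,\alpha_3)$ and neither polynomial vanishes at $\alpha_2$) and invoking Theorem~\ref{lem1}, which counts exactly $i-1$ zeros of $S_i^{(k+1)}$ and $j-1$ zeros of $S_j^{(k)}$ in $(\alpha_1,\alpha_2)$, gives $(i-1)-(j-1)\in\{0,1\}$, hence $i=j$ or $i=j+1$.

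For the converse I would pass to the self-adjoint form of \eqref{lame2}. Writing $\sigma=\rho_1+\rho_2+\rho_3$, matching the top-degree coefficient in \eqref{lame2} shows $\mu=\mu_k:=k(k-1+\sigma)$ for a degree-$k$ solution, so the degree-$k$ and degree-$(k+1)$ equations carry different spectral parameters, with $\mu_{k+1}-\mu_k=2k+\sigma>0$. With $w(x)=\prod_{\ell=1}^3|x-\alpha_\ell|^{\rho_\ell}$, $u=S_j^{(k)}$ and $v=S_i^{(k+1)}$ satisfy $(wu')'=\frac{\mu_k(x-\nu_j^{(k)})}{A}\,wu$ and the analogue for $v$, and the weighted Wronskian $W=w(uv'-u'v)$ obeys
\[
W'=\frac{w}{A}\,uv\,G(x),\qquad G(x)=\mu_{k+1}\bigl(x-\nu_i^{(k+1)}\bigr)-\mu_k\bigl(x-\nu_j^{(k)}\bigr).
\]
The essential new feature compared with Theorem~\ref{theo1} is visible here: for equal degrees $G$ reduces to a nonzero constant, whereas now $G$ is genuinely linear with positive slope and a single root $x^\ast$. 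I would also record that the reflection $x\mapsto -x$ (which permutes the $\alpha_\ell$ and $\rho_\ell$ and sends $S_j^{(k)}$ to the Stieltjes polynomial of index $k+2-j$ for the reflected data) interchanges the cases $i=j$ and $i=j+1$; hence it suffices to treat $i=j$.

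For $i=j$, the Van Vleck interlacing \eqref{inequality} gives $x^\ast<\nu_j^{(k+1)}<\nu_j^{(k)}$, and by Theorem~\ref{lem1} the extra zero of $v$ lies in $(\alpha_2,\alpha_3)$ while $u$ and $v$ share the same count $j-1$ of zeros in $(\alpha_1,\alpha_2)$. On $(\alpha_2,\alpha_3)$ one has $A<0$, and since $x^\ast<\nu_j^{(k)}$ the zero-free region of Theorem~\ref{lem1} forces every zero of $u$ there to exceed $x^\ast$, so $G>0$ and $\frac{w}{A}G<0$ on the convex hull of those zeros. The standard sign-chase (evaluate $W=-w\,u'v$ at two consecutive zeros of $u$ and integrate $W'$ across the gap) then produces a zero of $v$ between each consecutive pair of zeros of $u$; combined with the exact counts and with $W(\alpha_2)=W(\alpha_3)=0$ this pins down the bracketing on the right, including the extra zero of $v$ beyond the largest zero of $u$.

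The main obstacle is the interval $(\alpha_1,\alpha_2)$, where the counts are equal and where $G$ need not keep a constant sign: because $x^\ast<\nu_j^{(k+1)}$ but $x^\ast$ may fall inside $(\alpha_1,\alpha_2)$, the one-sided comparison above can break down on $(x^\ast,\alpha_2)$, and a one-directional Sturm estimate cannot by itself produce the exact alternation forced by equal counts. To close this I would argue globally with $W$, exploiting that $W$ vanishes at all three singular points $\alpha_1,\alpha_2,\alpha_3$ (where $w\to0$) and that, by the zero-free regions of Theorem~\ref{lem1} together with $\nu_j^{(k+1)}<\nu_j^{(k)}$, the last left zero of $u$ lies to the right of the last left zero of $v$. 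Bounding the interior zeros of $W$ through $W'=\frac{w}{A}Guv$ and the location of $x^\ast$, and ruling out common zeros of $u$ and $v$ (a shared zero would be a double zero of $W$), should force alternation on $(\alpha_1,\alpha_2)$ as well; together with the preceding paragraph and the forced global pattern this yields the full chain of inequalities. Establishing the alternation on this equal-count, sign-indefinite interval is the crux of the argument.
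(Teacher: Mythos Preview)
Your setup is essentially the paper's: the same weighted Wronskian identity $W'=\frac{w}{A}\,G\,uv$ with $G(x)=(\mu_{k+1}-\mu_k)(x-x^\ast)$, the same ``only if'' counting, and the same reduction of $i=j+1$ to $i=j$ (the paper just says ``a nearly identical argument holds''; your reflection is a cleaner way to say the same thing). The gap is that you never pin down where $x^\ast$ lies, and this is exactly the lemma the paper proves and uses.

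The paper shows, by a short contradiction argument, that for $i=j$ one has $\alpha_1<x^\ast<\alpha_2$ (their $\hat\nu_j$): if $x^\ast\le\alpha_1$ then $\frac{w}{A}G>0$ on all of $(\alpha_1,\alpha_2)$, and the one-sided Sturm comparison forces at least $j$ zeros of $S_j^{(k)}$ there, contradicting Theorem~\ref{lem1}; if $x^\ast\ge\alpha_2$ one gets $j$ zeros of $S_j^{(k+1)}$ in $(\alpha_1,\alpha_2)$, the same contradiction. Once $x^\ast\in(\alpha_1,\alpha_2)$ is known, $(\alpha_2,\alpha_3)$ becomes clean (the sign of $\frac{w}{A}G$ is constant on the \emph{entire} interval, not merely on the convex hull of the zeros of $u$, so the edge argument at $\alpha_2$ goes through and the exact count $k-j+2$ forces one zero of $v$ in each of the $k-j+2$ subintervals), and on $(\alpha_1,\alpha_2)$ one splits at $x^\ast$ and runs the comparison in opposite directions on the two pieces.

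Without this localization your argument is incomplete in two places. First, on $(\alpha_2,\alpha_3)$ your sign control only extends over the convex hull of the zeros of $u$; if $x^\ast>\alpha_2$ (which you have not excluded) the edge interval $(\alpha_2,x_1')$ carries a sign change of $G$, and ``exact counts plus $W(\alpha_2)=0$'' does not by itself rule out two zeros of $v$ in some interior gap and none near $\alpha_2$. Second, on $(\alpha_1,\alpha_2)$ you acknowledge the difficulty but the proposed ``global $W$'' argument is not carried out; in particular, the assertion that ``the last left zero of $u$ lies to the right of the last left zero of $v$'' is precisely part of the interlacing you are trying to prove and cannot be deduced merely from $\nu_j^{(k+1)}<\nu_j^{(k)}$ and the zero-free region. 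The missing idea is the contradiction lemma locating $x^\ast$; with it, both intervals are handled by the very Sturm machinery you already set up.
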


%%%%%%%%%%%%%% FIGURE %%%%%%%%%%%%%
\begin{figure}[h]
\begin{center}
        \includegraphics*[scale=.75]{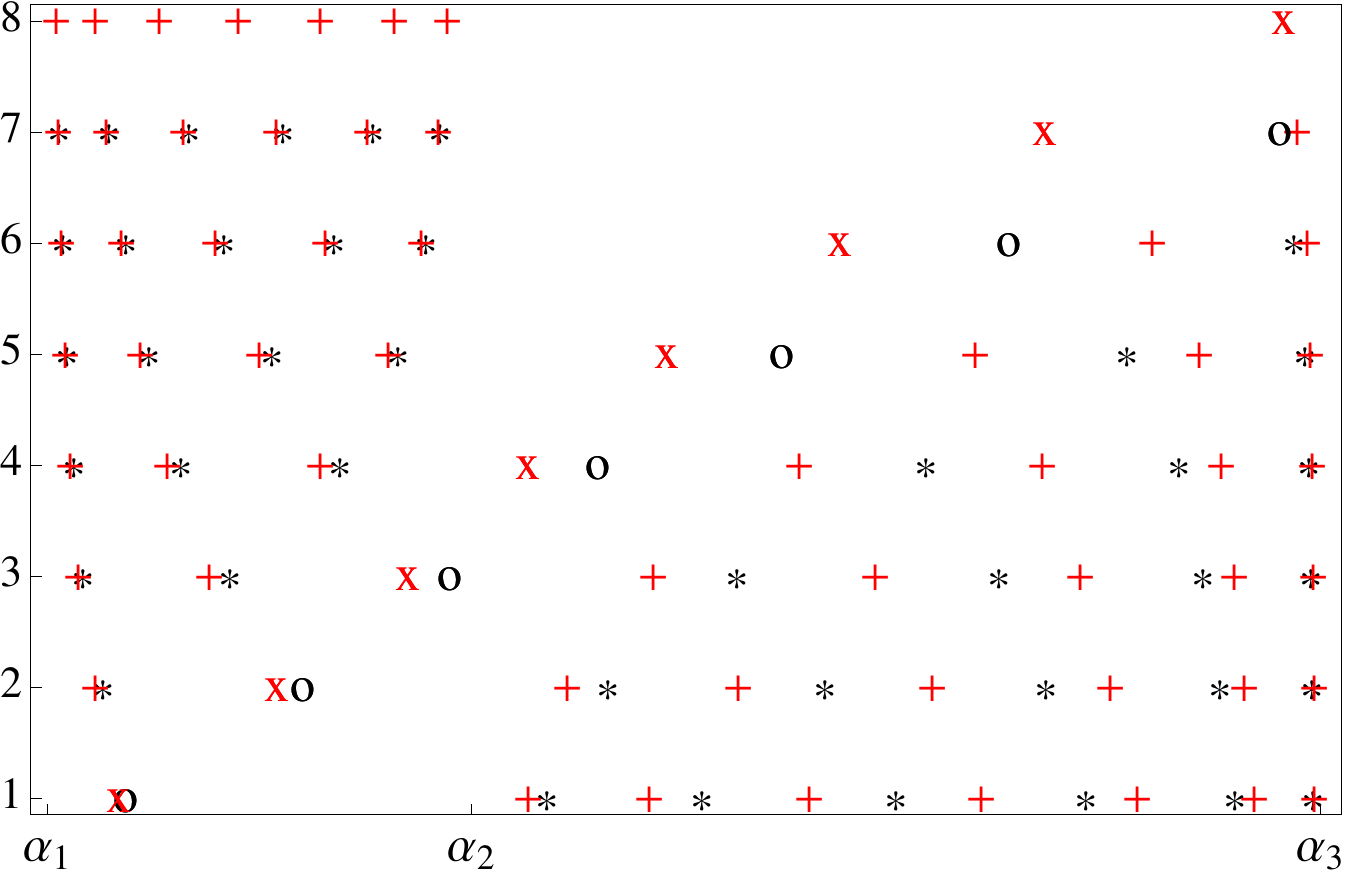}
\end{center}
\caption{Zeros of Stieltjes and Van Vleck polynomials.  On row $j$ are shown
$*$:  zeros of $S_j^{(k)}$; \ $+$: zeros of $S_j^{(k+1)}$; \ O:  Van Vleck zero $\nu_j^{(k)}$; \ X:  Van Vleck zero $\nu_j^{(k+1)}$,  for $k=6$.  Values of the parameters are $(\alpha_1, \alpha_2, \alpha_3) = (-1, 0, 2), \; (\rho_1, \rho_2, \rho_3) = (1, 2, 1/3)$.}
\label{figure1}
\end{figure}
%%%%%%%%%%%%%%%%%%%%%%%%%%%%%%%%

In the following section we prove these results.  Following this, in Section \ref{s3}, we show how these results can be combined with known asymptotic properties of Stieltjes and Van Vleck polynomials to produce new results.  In particular, we construct sequences of Van Vleck zeros to converge to any number in $[\alpha_1, \alpha_3]$, and calculate the asymptotic zero distribution of the Stieltjes polynomials associated with these Van Vleck zeros.

Here we also take up the question of whether or not there exist any orthogonal sequences of Stieltjes polynomials.  This is a natural question to ask, for several reasons.
For one thing, we see from Theorem~\ref{theo2}  that any sequence of Stieltjes polynomials $\{S_{j_k}^{(k)}\}$ with $j_{k+1}=j_k$ or $j_k+1$ shares at least one thing in common with orthogonal polynomials, namely the well known fact that the zeros of orthogonal polynomials of successive degrees interlace.
Moreover,  equation \eqref{lame1} with $p=2$ is the Jacobi differential equation, and the polynomial solutions are the Jacobi polynomials which are, of course, orthogonal \cite{to05}.  We will also show that certain of the sequences $\{S_{j_k}^{(k)}\}$ have the same asymptotic zero distribution as sequences of orthogonal polynomials.  Additionally, Ismail \cite{is05} has shown that under fairly general conditions, equilibrium positions of particles in a logarithmic potential are the zeros of orthogonal polynomials.  In our setting, the sufficient condition is only violated by the fact that the term in front of $S'(x)$ in \eqref{lame1} is not differentiable at $\alpha_2$.  Ismail's theorem gives only sufficient conditions for orthogonality, however, so the possibility of orthogonality of Stieltjes polynomials has remained an important open question. 
It is worth noting that, in spite of these intriguing similarities,  in the over 150 year  literature on the Lam\'e equation, 
we have found no definitive statement or conjecture concerning orthogonality of polynomial solutions of \eqref{lame1} when $p\geq 3$.  

We end Section~\ref{s3} by proving that there are no sequences of Stieltjes polynomials that are orthogonal with respect to a measure.
We conclude in  Section~\ref{s4} with a
 few remarks and comments on future directions.

\section{Proofs of main results }
\label{s2}

For the proofs of the theorems  we will need the following lemma, which is basically the Sturm separation theorem applied in the intervals $(\alpha_1, \alpha_2)$ and $(\alpha_2, \alpha_3)$.
\begin{lemma}
There is a zero of $S_{j+1}^{(k)}$  between every two zeros of $S_j^{(k)}$ in the interval $(\alpha_1, \alpha_2)$, and between every zero of $S_j^{(k)}$ in $(\alpha_1, \alpha_2)$ and either of the singular points $\alpha_1, \alpha_2$.
Likewise, there is a zero of $S_{j}^{(k)}$  between every two zeros of $S_{j+1}^{(k)}$ in the interval $(\alpha_2, \alpha_3)$, and between every zero of $S_{j+1}^{(k)}$ in $(\alpha_2, \alpha_3)$ and either of the singular points $\alpha_2, \alpha_3$.
\label{lem2}
\end{lemma}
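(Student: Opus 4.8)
The plan is to rewrite \eqref{lame2} in self-adjoint form and then run a Wronskian (Picone-type) comparison on each of the intervals $(\alpha_1,\alpha_2)$ and $(\alpha_2,\alpha_3)$ separately, with extra care at the singular endpoints. Multiplying \eqref{lame2} by the integrating factor
\[
w(x)=\prod_{j=1}^3|x-\alpha_j|^{\rho_j}, \qquad \frac{w'(x)}{w(x)}=\sum_{j=1}^3\frac{\rho_j}{x-\alpha_j},
\]
turns the equation for $S_j^{(k)}$ into the Sturm--Liouville form $\bigl(w\,S_j^{(k)\prime}\bigr)'=\frac{w}{A}\,\mu\bigl(x-\nu_j^{(k)}\bigr)S_j^{(k)}$. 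Since each $\rho_j>0$, the weight $w$ is positive on $(\alpha_1,\alpha_3)\setminus\{\alpha_2\}$ and vanishes like $|x-\alpha_i|^{\rho_i}$ as $x\to\alpha_i$.

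Setting $u=S_j^{(k)}$ and $v=S_{j+1}^{(k)}$, I would introduce the modified Wronskian $W=w\,(u'v-uv')$. Using the two self-adjoint equations, a short computation gives
\[
W'=\frac{w}{A}\,\mu\bigl(\nu_{j+1}^{(k)}-\nu_j^{(k)}\bigr)\,u\,v.
\]
Here $\mu=k\bigl(k-1+\rho_1+\rho_2+\rho_3\bigr)>0$, obtained by matching the leading coefficient in \eqref{lame2}, and $\nu_{j+1}^{(k)}-\nu_j^{(k)}>0$ by our labeling of the Van Vleck zeros in increasing order. Thus the constant factor is positive, and on $(\alpha_1,\alpha_2)$, where $A>0$, $W'$ has the sign of $uv$, while on $(\alpha_2,\alpha_3)$, where $A<0$, it has the opposite sign.

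For the interior assertion on $(\alpha_1,\alpha_2)$ I would argue by contradiction. If $a<b$ are consecutive zeros of $u$ with no zero of $v$ between them, then after normalizing signs $uv>0$ on $(a,b)$, so $\int_a^b W'>0$; on the other hand, since the zeros are simple (Theorem~\ref{lem1}), evaluating $W$ at $a$ and $b$ gives $W(a)\ge0\ge W(b)$, contradicting $W(b)-W(a)=\int_a^b W'>0$. Exchanging the roles of $u$ and $v$ gives the companion statement on $(\alpha_2,\alpha_3)$: the sign reversal of $w/A$ there is exactly offset by the exchange, so between consecutive zeros of $v$ one finds a zero of $u$.

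The main obstacle is the claim at the singular endpoints, where $w/A$ is unbounded. Two consequences of $\rho_i>0$ resolve this. First, because $u$ and $v$ are polynomials (hence analytic at each $\alpha_i$) while $w(\alpha_i)=0$, the Wronskian extends continuously with $\lim_{x\to\alpha_i}W(x)=0$. Second, near $\alpha_i$ one has $w/A\sim|x-\alpha_i|^{\rho_i-1}$, which is integrable precisely because $\rho_i>0$, so the fundamental theorem of calculus still yields $W(c)-\lim_{x\to\alpha_i}W(x)=\int_{\alpha_i}^{c}W'$. Running the same sign comparison between the extreme zero of the relevant polynomial in $(\alpha_1,\alpha_2)$ (resp. $(\alpha_2,\alpha_3)$) and the adjacent singular point then forces a zero of the other polynomial in between, completing the proof. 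By Theorem~\ref{lem1} all the zeros involved lie strictly inside $(\alpha_1,\alpha_3)$ and avoid $\alpha_2$, which is what keeps these boundary terms well behaved.
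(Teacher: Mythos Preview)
Your argument is essentially the paper's own: the same integrating factor $w=J$, the same Wronskian identity $\bigl(w(u'v-uv')\bigr)'=\mu_k(\nu_{j+1}-\nu_j)\frac{w}{A}\,uv$, the same sign analysis on each subinterval, and the same use of $w(\alpha_i)=0$ to handle the singular endpoints (you are in fact more careful than the paper about the integrability of $w/A$ there). One small correction: the simplicity of the Stieltjes zeros and the fact that they lie in $(\alpha_1,\alpha_3)\setminus\{\alpha_2\}$ are classical results quoted earlier in the paper, not consequences of Theorem~\ref{lem1}; citing Theorem~\ref{lem1} here would be circular, since its proof invokes the very lemma you are proving.
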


\begin{proof}
First we note that the constant $\mu$ in \eqref{lame1} is determined by the degree $k$ of the Stieltjes polynomials by substitution and identification of the powers, as
\begin{equation}
\mu = \mu_k = k\left( k-1 + \rho_1+\rho_2+\rho_3\right). \label{mu}
\end{equation}
For the remainder of this proof, since we are dealing with Stieltjes polynomials of fixed degree $k$, we will omit the superscripts and simply write $S_j = S_j^{(k)}$ and $S_{j+1}=S_{j+1}^{(k)}$.
Now, the Stieltjes polynomials $S_j$ and $S_{j+1}$ satisfy
\begin{eqnarray}
 \left[\frac{d^2}{dx^2} +  \sum_{j=1}^3  \frac{\rho_j}{x-\alpha_j}
  \, \frac{d}{dx} 
  - \frac{\mu_k\left(x-\nu_j^{(k)}\right)}{A(x)} \right] S_j(x) = 0,
 \label{Sj}\\
 \left[\frac{d^2}{dx^2} +  \sum_{j=1}^3  \frac{\rho_j}{x-\alpha_j}
  \, \frac{d}{dx} 
  - \frac{\mu_k\left(x-\nu_{j+1}^{(k)}\right)}{A(x)} \right] S_{j+1}(x) = 0.
 \label{Sj+1}
\end{eqnarray}
Define the integrating factor
\beq 
J(x) = \prod_{j=1}^3 \left| x - \alpha_j \right|^{\rho_j}.
\eeq
Then 
\beq
J'(x) = J(x)\sum_{j=1}^3  \frac{\rho_j}{x-\alpha_j}.
\eeq  
Upon  multiplying \eqref{Sj} by $S_{j+1}$ and \eqref{Sj+1} by $S_j$, taking
the difference of the result, and then multiplying by $J$, we obtain
\begin{equation}
\frac{d}{dx} \left[ J \left(S_{j+1}' S_j - S_{j+1} S_j ' \right)\right]  = 
Q  S_j S_{j+1},
\label{sturm}
\end{equation}
where
\beq
Q(x) = \left(\nu_{j} -\nu_{j+1}\right) \frac{J(x)}{A(x)}.
\eeq
Note that $Q<0$ in $(\alpha_1, \alpha_2)$ and $Q>0$ in $(\alpha_2, \alpha_3)$.
Now, consider two consecutive zeros of $S_j$, $x_1$ and $x_2$, in the interval $(\alpha_1, \alpha_2)$. 
Then $S_j'$ must alternate signs at $x_1$ and $x_2$.  Thus, 
\beq
\left.J\left(S_{j+1}'S_j-S_{j+1}S_j'\right)\right|_{x=x_1}^{x=x_2} = 
-\left.J\left(S_{j+1}S_j'\right)\right|_{x=x_1}^{x=x_2}
\eeq
is positive if $S_jS_{j+1}>0$  in $(x_1, x_2)$, and negative if $S_jS_{j+1}<0$ in $(x_1, x_2)$.  But \eqref{sturm} implies that this expression is negative if $S_jS_{j+1} > 0$ in $(x_1, x_2)$ and positive if $S_jS_{j+1} < 0$ in $(x_1, x_2)$.  Thus it must be that $S_{j+1}$ changes sign in $(x_1, x_2)$, and we have shown that between any two consecutive zeros of $S_j$ in $(\alpha_1, \alpha_2)$, there is a zero of $S_{j+1}$.

Now let $x_1$ be the smallest zero  of $S_j$ in $(\alpha_1, \alpha_2)$.  Then, since $J(\alpha_1)=0$,
\beq
\left.J\left(S_{j+1}'S_j-S_{j+1}S_j'\right)\right|_{x=\alpha_1}^{x=x_1} = 
-J(x_1)S_{j+1}(x_1)S_j'(x_1)
\eeq
also has the same sign as $S_jS_{j+1}$ in $(\alpha_1, x_1)$ if $S_{j+1}$ does not change sign in this interval, again contradicting \eqref{sturm}.  A similar argument can be applied to the largest zero of $S_j$ in $(\alpha_1, \alpha_2)$ and $\alpha_2$, which establishes that between every zero of $S_j$ in $(\alpha_1, \alpha_2)$ and either of the singular points $\alpha_1, \alpha_2$, there is a zero of $S_{j+1}$.

This argument may be exactly repeated in the interval $(\alpha_2, \alpha_3)$, noting that $Q>0$ in this interval.
It follows that between any two consecutive zeros of $S_{j+1}$ in $(\alpha_2,\alpha_3)$, or between $\alpha_2$ and the smallest zero of $S_{j+1}$ in $(\alpha_2,\alpha_3)$, or between the largest zero of $S_{j+1}$ in $(\alpha_2, \alpha_3)$ and $\alpha_3$, 
there is a zero of $S_j$.  
\end{proof}

\begin{proof}[Proof of Theorem \ref{lem1}]
According to a classical result of Stieltjes \cite{MR1554669, whwa96}, every possible distribution of the zeros of the Stieltjes polynomials in the intervals $(\alpha_1, \alpha_2)$ and $(\alpha_2,\alpha_3)$ occurs.  That is, 
for each integer $k$ and any integer $0\leq m\leq k$, there is a Stieltjes polynomial of degree $k$ with $m$ zeros in $(\alpha_1, \alpha_2)$ and $k-m$ zeros in $(\alpha_2,\alpha_3)$.  Thus it suffices to show that there are at least as many zeros of $S_{j+1}^{(k)}$ than of $S_j^{(k)}$ in $(\alpha_1,\alpha_2)$.  But this is an immediate consequence of Lemma \ref{lem2}, so the first part of the theorem is proved.

For the second part of the theorem, according to Shah \cite[cf. Theorem 2]{MR0230954}, between any zero of $S(x)$ and $\nu$ there is either a zero of $S'(x)$ or $\alpha_2$.  Suppose $\nu>\alpha_2$, and that
 there is a zero $x_1$ of $S(x)$ between $\alpha_2$ and $\nu$.  Since there is a zero of $S'(x)$ between $x_1$ and $\nu$ there must be a zero $x_2$ of $S(x)$ greater than $\nu$.  We may assume that $x_1$ and $x_2$ are consecutive.  But since the zeros of $S(x)$ are simple there cannot be a zero of $S'(x)$ in both intervals $(x_1, \nu)$ and $(\nu, x_2)$, which is a contradiction.   
 The case when $\nu<\alpha_2$ is similar.
\end{proof}

\begin{proof}[Proof of Theorem \ref{theo1}]
Combining Lemma \ref{lem2}  with Theorem \ref{lem1}, we see that between the $j-1$ zeros of $S_j$ in $(\alpha_1, \alpha_2)$ there are $j-2$ zeros of $S_{j+1}$.  The other two zeros of $S_{j+1}$ in $(\alpha_1,\alpha_2)$ lie between $\alpha_1$ and the smallest zero of $S_j$ and between the largest zero of $S_j$ in $(\alpha_1, \alpha_2)$ and $\alpha_2$.  Similarly, between the $k-j$ zeros of $S_{j+1}$ in $(\alpha_2,\alpha_3)$ there are $k-j-1$ zeros of $S_j$.  And the other two zeros of $S_j$ in $(\alpha_2, \alpha_3)$ lie between $\alpha_2$ and the smallest zero of $S_{j+1}$ in $(\alpha_2, \alpha_3)$ and between the largest zero of $S_{j+1}$ in $(\alpha_2, \alpha_3)$ and $\alpha_3$.  We have thus accounted for all of the zeros of $S_j$ and $S_{j+1}$, and the interlacing is proved.
\end{proof}

We have actually proved a stronger statement than Theorem \ref{theo1}.  We note this in the following corollary.

\begin{coro}
Let $l>j$.  Then between any two zeros of $S_j^{(k)}$ in $(\alpha_1, \alpha_2)$ there is a zero of $S_l^{(k)}$.  Between any two zeros of $S_l^{(k)}$ in $(\alpha_2, \alpha_3)$ there is a zero of $S_j^{(k)}$.
\end{coro}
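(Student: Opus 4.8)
The plan is to observe that the Sturm-separation computation underlying Lemma~\ref{lem2} never used the consecutiveness of the indices $j$ and $j+1$; it used only the relative order of the two Van Vleck zeros involved. So I would simply rerun that argument with $S_{j+1}$ replaced by $S_l$.

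First I would write the equations \eqref{lame2} for $S_j = S_j^{(k)}$ and $S_l = S_l^{(k)}$, multiply the first by $S_l$ and the second by $S_j$, subtract, and multiply by the integrating factor $J(x)=\prod_{m=1}^3|x-\alpha_m|^{\rho_m}$. Exactly as in the derivation of \eqref{sturm}, the cross terms collapse and I obtain
\beq
\frac{d}{dx}\left[J\left(S_l' S_j - S_l S_j'\right)\right] = Q\,S_j S_l, \qquad Q(x)=\left(\nu_j-\nu_l\right)\frac{J(x)}{A(x)}.
\eeq
Nothing in this step changes, since the only algebraic input is that $S_j$ and $S_l$ solve \eqref{lame2} with the same constant $\mu_k$ but distinct Van Vleck zeros $\nu_j$ and $\nu_l$.

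Second I would pin down the sign of $Q$. Because the Van Vleck zeros of order $k$ are strictly increasing, $l>j$ forces $\nu_j<\nu_l$, hence $\nu_j-\nu_l<0$. Since $J>0$ throughout, while $A>0$ on $(\alpha_1,\alpha_2)$ and $A<0$ on $(\alpha_2,\alpha_3)$, this gives $Q<0$ on $(\alpha_1,\alpha_2)$ and $Q>0$ on $(\alpha_2,\alpha_3)$ --- precisely the signs used in Lemma~\ref{lem2}. With these signs established, the sign-contradiction argument of Lemma~\ref{lem2} transfers verbatim: at two consecutive zeros $x_1,x_2$ of $S_j$ in $(\alpha_1,\alpha_2)$ the derivative $S_j'$ alternates sign, so the boundary evaluation of $J(S_l'S_j-S_lS_j')$ would contradict the sign of $Q\,S_jS_l$ dictated by the displayed identity unless $S_l$ vanishes somewhere in $(x_1,x_2)$; the symmetric statement on $(\alpha_2,\alpha_3)$ follows in the same way with the roles of $S_j$ and $S_l$ interchanged. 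This yields both assertions of the corollary.

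The main point to verify --- and really the only obstacle --- is the claim that the proof of Lemma~\ref{lem2} depends on the second index solely through $\mathrm{sgn}(\nu_j-\nu_l)$. Once that observation is granted, the monotonicity of the Van Vleck zeros does all the work, and no genuinely new estimate is required; the corollary is thus a free generalization obtained by inspecting, rather than re-deriving, the earlier argument.
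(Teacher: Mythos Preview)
Your approach is correct and coincides with the paper's: the authors give no separate proof of the corollary, writing only that ``we have actually proved a stronger statement than Theorem~\ref{theo1},'' which is precisely your observation that the Sturm-separation argument behind Lemma~\ref{lem2} uses nothing about the index $j+1$ beyond $\nu_j<\nu_{j+1}$, hence applies verbatim to any $l>j$. Your sign check on $Q$ matches the paper's, and the rest is indeed a rerun of that proof.
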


\begin{proof}[Proof of Theorem \ref{theo2}]
First we prove the ``only if'' part.  In order for the zeros of $S_i^{(k+1)}$ and $S_j^{(k)}$ to interlace, it must be the case that the smallest zero of $S_i^{(k+1)}$ is smaller than the smallest zero of $S_j^{(k)}$, which is impossible if $i<j$ since then 
 there would be fewer zeros of $S_i^{(k+1)}$ than of $S_j^{(k)}$ in the interval $(\alpha_1,\alpha_2)$.  Similarly, if $i>j+1$ then there are fewer zeros of $S_i^{(k+1)}$ than of $S_j^{(k)}$ in the interval $(\alpha_2, \alpha_3)$.

For the ``if'' part, 
as in the proof of Lemma \ref{lem2}, we derive the following expression
\begin{equation}
\frac{d}{dx} \left[ J \left(\frac{dS_i^{(k+1)}}{dx} S_j^{(k)} - S_i^{(k+1)} \frac{dS_j^{(k)}}{dx} \right)\right]  = QS_j^{(k)} S_i^{(k+1)},
\label{sturm2}
\end{equation}
where in this case
\begin{eqnarray}
Q(x) &=&
\left(\mu_{k+1}-\mu_k\right)\frac{J(x)}{A(x)}\left( x - \hat{\nu}_i\right), \quad
\mbox{and} \\
\hat{\nu}_i & = & \frac{\mu_{k+1}\nu_i^{(k+1)} - \mu_k\nu_j^{(k)}}{\mu_{k+1}-\mu_k}.
\end{eqnarray}

Note that \eqref{inequality} implies
\beq 
\hat{\nu}_j < \nu_j^{(k+1)}   < \alpha_3
\quad \mbox{and} \quad
\alpha_1 < \nu_{j+1}^{(k+1)} < \hat{\nu}_{j+1}.
\eeq 
Suppose, for the moment, that $\alpha_1 < \hat{\nu}_j < \alpha_2$.  Then, since $Q>0$ in $(\hat{\nu}_j, \alpha_2)$, between every two consecutive zeros of $S_j^{(k+1)}$ in $(\hat{\nu}_j,\alpha_2)$, and between the largest zero of $S_j^{(k+1)}$ in $(\hat{\nu}_j,\alpha_2)$ and $\alpha_2$, there is a zero of $S_j^{(k)}$.  Since $Q<0$ in $(\alpha_1, \hat{\nu}_j)$, there is a zero of $S_j^{(k+1)}$ between every two zeros of $S_j^{(k)}$ in $(\alpha_1, \hat{\nu}_j)$, and between $\alpha_1$ and the smallest zero of $S_j^{(k)}$ in $(\alpha_1, \hat{\nu}_j)$.  This accounts for all of the $j-1$ zeros of $S_j^{(k)}$ and of $S_j^{(k+1)}$ in $(\alpha_1, \alpha_2)$.  Similarly, since $Q<0$ in $(\alpha_2, \alpha_3)$, between every two zeros of $S_j^{(k)}$ in $(\alpha_2, \alpha_3)$ and between every zero of $S_j^{(k)}$ in $(\alpha_2, \alpha_3)$ and either of the singular points $\alpha_2, \alpha_3$, there is a zero of $S_j^{(k+1)}$.  This accounts for the $k$ zeros of $S_j^{(k)}$ and the $k+1$ zeros of $S_j^{(k+1)}$ and the interlacing is proved in this case.

A nearly identical argument holds in the case when $\alpha_2<\hat{\nu}_{j+1}<\alpha_3$ to show that the zeros of $S_{j+1}^{(k+1)}$ and $S_j^{(k)}$ interlace.
The proof is thus completed by the following lemma.
\end{proof}

\begin{lemma}
$\alpha_1<\hat{\nu}_j<\alpha_2$ and $\alpha_2 < \hat{\nu}_{j+1} < \alpha_3$. 
\end{lemma}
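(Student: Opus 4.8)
The plan is to pin down each of $\hat\nu_j$ and $\hat\nu_{j+1}$ as the point where the weight $Q$ in \eqref{sturm2} changes sign, and then to show that if that point fell outside the designated interval, the Sturm comparison of Lemma~\ref{lem2} would force one of the two polynomials to carry strictly more zeros in that interval than it actually has. The engine is the sign of $Q$: since $J>0$ on $(\alpha_1,\alpha_3)$, since $A>0$ on $(\alpha_1,\alpha_2)$ and $A<0$ on $(\alpha_2,\alpha_3)$, and since $\mu_{k+1}-\mu_k=2k+\rho_1+\rho_2+\rho_3>0$ by \eqref{mu}, the sign of $Q=(\mu_{k+1}-\mu_k)\frac{J}{A}(x-\hat\nu_i)$ on each of the two open intervals is governed entirely by the sign of $x-\hat\nu_i$. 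In particular, if $\hat\nu_i$ lies outside a given one of these intervals, then $Q$ has a single, constant sign throughout that interval.

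The crucial bookkeeping comes from Theorem~\ref{lem1}: the relevant pairs of polynomials have \emph{equal} zero counts in the relevant interval. On $(\alpha_1,\alpha_2)$ both $S_j^{(k)}$ and $S_j^{(k+1)}$ have exactly $j-1$ zeros, while on $(\alpha_2,\alpha_3)$ both $S_j^{(k)}$ and $S_{j+1}^{(k+1)}$ have exactly $k-j+1$ zeros. These are the only two intervals in which the counts coincide, which is exactly why $\hat\nu_j$ is to be located in $(\alpha_1,\alpha_2)$ (comparing $S_j^{(k)},S_j^{(k+1)}$, i.e. $i=j$) and $\hat\nu_{j+1}$ in $(\alpha_2,\alpha_3)$ (comparing $S_j^{(k)},S_{j+1}^{(k+1)}$, i.e. $i=j+1$).

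I would then argue by contradiction. Suppose $\hat\nu_j\notin(\alpha_1,\alpha_2)$; then $Q$ is single-signed on $(\alpha_1,\alpha_2)$, and running the Sturm argument of Lemma~\ref{lem2} on this interval — exactly as for \eqref{sturm}, using $J(\alpha_1)=J(\alpha_2)=0$ to treat the two end gaps — forces a zero of the other polynomial inside each of the $j$ subintervals into which the $j-1$ zeros of one polynomial, together with $\alpha_1$ and $\alpha_2$, cut $(\alpha_1,\alpha_2)$. (If $Q<0$ it is $S_j^{(k+1)}$ that is forced; if $Q>0$ it is $S_j^{(k)}$, matching the ``$S_{j+1}$''/``$S_j$'' roles in Lemma~\ref{lem2}.) The forced polynomial would then have at least $j$ zeros in $(\alpha_1,\alpha_2)$, contradicting its exact count of $j-1$. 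Hence $\alpha_1<\hat\nu_j<\alpha_2$. The bound $\alpha_2<\hat\nu_{j+1}<\alpha_3$ is verbatim the same on $(\alpha_2,\alpha_3)$: a constant sign of $Q$ there would force at least $k-j+2$ zeros of one polynomial, contradicting the common count $k-j+1$.

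The step I expect to require the most care is the boundary accounting, especially the extreme indices $j=1$ and $j=k+1$, where one polynomial has no zeros at all in the interval in question, so there are no interior zeros to anchor the Sturm argument. There the forcing must come from the single gap spanning the whole interval: integrating \eqref{sturm2} across that interval and using that $J$ vanishes at both endpoints makes the boundary term vanish, so the integral of $Q\,S_j^{(k)}S_i^{(k+1)}$ is zero; since $Q$ is single-signed, the product cannot keep a constant sign, which forces the missing zero and yields the contradiction. The remaining routine checks are the direction of the Sturm forcing (the polynomial in the ``$S_{j+1}$'' slot of the bracket is the one forced when $Q<0$) and the elementary sign tabulation of $A$ on the two intervals.
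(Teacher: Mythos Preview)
Your argument is correct and follows essentially the same route as the paper: assume $\hat\nu_j$ (resp.\ $\hat\nu_{j+1}$) lies outside the target interval so that $Q$ has constant sign there, run the Sturm comparison of Lemma~\ref{lem2} on \eqref{sturm2} to force at least $j$ (resp.\ $k-j+2$) zeros of one of the two polynomials in that interval, and contradict the exact count $j-1$ (resp.\ $k-j+1$) from Theorem~\ref{lem1}. Your explicit handling of the extreme cases $j=1$ and $j=k+1$ via the vanishing of the boundary term in the integrated identity is a nice addition that the paper leaves implicit.
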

\begin{proof}
Suppose that $\hat{\nu}_j\leq\alpha_1$.  Then it would be the case that $Q>0$ in $(\alpha_1, \alpha_2)$ and by arguments similar to the above, between every two zeros of $S_j^{(k+1)}$ in $(\alpha_1, \alpha_2)$ and between every zero of $S_j^{(k+1)}$ in $(\alpha_1, \alpha_2)$ and either of the singular points $\alpha_1, \alpha_2$, there is a zero of $S_j^{(k)}$.  But this would imply the existence of at least $j$ zeros of $S_j^{(k)}$ in $(\alpha_1, \alpha_2)$, which contradicts Theorem \ref{lem1}.  Likewise, if $\hat{\nu}_j\geq\alpha_2$, an similar argument would imply the existence of at least $j$ zeros of $S_j^{(k+1)}$ in $(\alpha_1, \alpha_2)$, which is also a contradiction. This  argument may be repeated to prove the second statement. 
\end{proof}

\section{Asmyptotic properties}
\label{s3}

We now investigate how the zeros of Stieltjes polynomials distribute over the interval $(\alpha_1, \alpha_3)$ in the limit  as the degree of the Stieltjes polynomials tends to infinity.  
Recall that Theorem \ref{lem1} says that if $\nu<\alpha_2$, then there are $j-1$ zeros of $S_{j}^{(k)}$ in $(\alpha_1, \nu)$ and $k-j+1$ zeros in $(\alpha_2, \alpha_3)$, and similarly if $\nu\geq\alpha_2$, \textit{mutatis mutandi}.

In \cite{bosh08} it is shown that the Van Vleck zeros have the asymptotic distribution given by a probability measure
supported on $(\alpha_1, \alpha_3)$, with density $\rho_V(x)$ given by the following: 
%(see Figure~\ref{vv_asy}):
\begin{equation}
\rho_V(x) = \left\{\begin{array}{ll}
\displaystyle \frac{1}{2\pi} \int_{\alpha_2}^{\alpha_3} \frac{ds}{\sqrt{A(s)(x-s)}}
& \; \mbox{ \ if \ } \; \alpha_1 < x < \alpha_2, \\[.2in]
\displaystyle \frac{1}{2\pi} \int_{\alpha_1}^{\alpha_2} 
\frac{ds}{\sqrt{A(s)(x-s)}}
& \; \mbox{ \ if \ } \; \alpha_2 < x < \alpha_3.
\end{array}\right.
\label{vvasy}
\end{equation}
Thus, given any point $\nu\in(\alpha_1, \alpha_3)$ there is a sequence of Van Vleck zeros $\left\{\nu_{j_k}^{(k)}\right\}$ that converges to $\nu$ as $k\rightarrow\infty$.  So we may ask, What is the distribution of the corresponding Stieltjes zeros in this same limit?  A partial answer to this question is given by Theorem \ref{lem1}, which implies that the limiting density is supported on a subset of  $(\alpha_1, \alpha_3)\setminus I$, where $I$ is the interval bounded by $\alpha_2$ and $\nu$.  The question can be  resolved by applying the results of \cite{MR1928260}.

%%%%%%%%%%%%%% FIGURE %%%%%%%%%%%%%
%\begin{figure}[h]
%\begin{center}
%        \includegraphics*[scale=.75]{vv_asy.pdf}
%\end{center}
%\caption{Asymptotic density of Van Vleck zeros when $(\alpha_1, \alpha_2, \alpha_3)=(-1,1/3,1)$.}
%\label{vv_asy}
%\end{figure}
%%%%%%%%%%%%%%%%%%%%%%%%%%%%%%%%

Mart\'inez-Finkelshtein and Saff \cite{MR1928260} consider the more general case of \eqref{lame1} for an arbitrary number $p$ of $\alpha_i$'s.  The authors fix relative proportions $\theta_1, \dots, \theta_{p-1}$ in each of the intervals $(\alpha_i, \alpha_{i+1}), \; i=1, \dots, p-1$, and extract a sequence of Stieltjes polynomials such that as the degree of each polynomial in the sequence tends to infinity, the fraction of its zeros in the interval $(\alpha_i, \alpha_{i+1})$ tends to $\theta_i$.  Under this assumption the authors
derive asymptotic results for the zeros of Stieltjes and Van Vleck polynomials.  We specialize as before to the $p=3$ case and recast these calculations in the light of our results.

Let $0\leq \theta\leq 1$.  Theorem 1 of  \cite{MR1928260} gives the asymptotic distribution of the zeros of a sequence  of Stieltjes polynomials. We interpret these in our setting in the following way.  Suppose the sequence $\left\{S_{j_k}^{(k)}\right\}$ is chosen such that the fraction of the zeros of $S_{j_k}^{(k)}$ in $(\alpha_1, \alpha_2)$ tends to $\theta$.    Some simple but tedious calculations shows that the limit $\lim_{k\rightarrow\infty}\nu_{j_k}^{(k)} = \nu$ of the corresponding Van Vleck zeros is determined by
\beq
\frac{1}{\pi}\int_{\alpha_1}^{\min(\alpha_2, \nu)} \sqrt{\left| \frac{\nu - x}{A(x)}\right|} dx = \theta.
\label{nu}
\eeq
Moreover, let $I$ by the interval bounded by $\alpha_2$ and $\nu$.  Then the asymptotic distribution of the Stieltjes polynomials $S_{j_k}^{(k)}$ is given by
\beq
\rho_S(x) = \left\{ \begin{array}{ll}
\displaystyle \frac{1}{\pi}\sqrt{ \frac{\nu-x}{A(x)}} & \; \mbox{ \ if \ } \; x \in (\alpha_1, \alpha_3)\setminus I \\[.1in]
0 & \; \mbox{ \ if \ } \; x \in I
\end{array} \right.
\label{Sden}
\eeq
%These densities are illustrated in Figure~\ref{S_asy}.  
We note, in particular, that if $\nu=\alpha_1, \alpha_2$ or $\alpha_3$ then $\rho_S$ is the so-called ``arcsin distribution'' supported on the intervals $(\alpha_2, \alpha_3)$, $(\alpha_1, \alpha_3)$ and $(\alpha_1,\alpha_2)$, respectively.

%%%%%%%%%%%%%% FIGURE %%%%%%%%%%%%%
%\begin{figure}[h]
%\begin{center}
%        \includegraphics*[scale=1.2]{S_asy.pdf}
%\end{center}
%\caption{Asymptotic density of zeros of Stieltjes polynomials $S_{j_k}^{(k)}$ when $(\alpha_1, \alpha_2, \alpha_3)=(-1,1/3,1)$.  Clockwise from upper left:  $\nu=0$, $\nu=-1$, $\nu=1/3$, $\nu=1$.}
%\label{S_asy}
%\end{figure}
%%%%%%%%%%%%%%%%%%%%%%%%%%%%%%%%

Now, by Theorem \ref{lem1}, $j_k-1$ zeros of $S_{j_k}^{(k)}$ lie in $(\alpha_1, \alpha_2)$.  So, if $j_k/k\rightarrow\theta$, then the fraction of zeros of $S_{j_k}^{(k)}$ in $(\alpha_1, \alpha_2)$ tends to $\theta$.  (One such sequence, that also satisfies $j_{k+1}=j_k$ or $j_k+1$, is $j_k=\lceil k\theta+1\rceil $.)  Combining our results with the asymptotic properties, we have the following.
\begin{theo}
(i)  Let $\{j_k\}$ be a sequence of positive integers such that $j_k/k \rightarrow \theta$.  Then we have the limit of sequences of Van Vleck zeros:
\beq
\lim_{k\rightarrow\infty}\nu_{j_k}^{(k)}=\nu, 
\eeq
where $\nu$ is determined by \eqref{nu}.   
(ii)  Given any $\nu\in[\alpha_1, \alpha_3]$, if $\theta$ is defined by \eqref{nu} and $j_k/k\rightarrow\theta$, then the sequence $\left\{\nu_{j_k}^{(k)}\right\}$ converges to $\nu$.
(iii)  If the sequence $\{j_k\}$ satisfies $j_{k+1} = j_k$ or $j_k+1$, then the sequence of Stieltjes polynomials $\left\{ S_{j_k}^{(k)}\right\}$ is an infinite sequence of polynomials with interlacing zeros and asymptotic zero distribution given by \eqref{Sden}.
\end{theo}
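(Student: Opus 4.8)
The plan is to handle the three parts in sequence, reducing each to a result already available: Theorem~\ref{lem1} for counting the zeros of $S_{j_k}^{(k)}$ in $(\alpha_1,\alpha_2)$, Theorem~\ref{theo2} for interlacing, and the asymptotic results of Mart\'inez-Finkelshtein and Saff \cite{MR1928260} as encoded in \eqref{nu}--\eqref{Sden}. For part (i) I would first invoke Theorem~\ref{lem1}, which gives $S_{j_k}^{(k)}$ exactly $j_k-1$ zeros in $(\alpha_1,\alpha_2)$, so that the fraction of its zeros lying in $(\alpha_1,\alpha_2)$ equals $(j_k-1)/k$. Since $j_k/k\to\theta$ and $1/k\to0$, this fraction also tends to $\theta$, which is precisely the hypothesis under which \eqref{nu} and \eqref{Sden} were derived from \cite{MR1928260}. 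Hence the corresponding Van Vleck zeros converge to the number $\nu$ determined by \eqref{nu}, proving (i).

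For part (ii) the crux is that \eqref{nu} determines $\nu$ uniquely from $\theta$. I would establish this by showing that the function
\beq
\theta(\nu) = \frac{1}{\pi}\int_{\alpha_1}^{\min(\alpha_2,\nu)} \sqrt{\left|\frac{\nu-x}{A(x)}\right|}\,dx
\eeq
is a continuous, strictly increasing bijection of $[\alpha_1,\alpha_3]$ onto $[0,1]$. For $\nu\in(\alpha_1,\alpha_2)$ both the integrand (for each fixed $x$) and the upper limit $\nu$ increase with $\nu$; for $\nu\in(\alpha_2,\alpha_3)$ the upper limit is frozen at $\alpha_2$ while the integrand still increases in $\nu$; in both regimes $\theta(\nu)$ is strictly increasing. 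The endpoint values $\theta(\alpha_1)=0$ and $\theta(\alpha_3)=1$ follow because the integral at $\nu=\alpha_3$ is exactly the total mass of the (arcsin) probability density $\rho_S$ on $(\alpha_1,\alpha_2)$, which is $1$ by normalization. Given $\nu\in[\alpha_1,\alpha_3]$, I define $\theta$ through this bijection and choose any $\{j_k\}$ with $j_k/k\to\theta$; part (i) then yields convergence of $\nu_{j_k}^{(k)}$ to the unique preimage of $\theta$, which by injectivity is $\nu$ itself.

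For part (iii) the interlacing is immediate: whenever $j_{k+1}=j_k$ or $j_k+1$, the pair $(j,i)=(j_k,j_{k+1})$ satisfies $i=j$ or $i=j+1$, so Theorem~\ref{theo2} applies to each consecutive pair and shows the zeros of $S_{j_k}^{(k)}$ and $S_{j_{k+1}}^{(k+1)}$ interlace; since there is exactly one chosen polynomial of each degree $k$, the family $\left\{S_{j_k}^{(k)}\right\}$ is genuinely an infinite sequence. The asymptotic zero distribution \eqref{Sden} is then supplied by part (i) together with \cite{MR1928260}, once the running hypothesis $j_k/k\to\theta$ is in force; I would note explicitly that the condition $j_{k+1}=j_k$ or $j_k+1$ governs only the interlacing, while the distribution requires the convergence $j_k/k\to\theta$, and that the example $j_k=\lceil k\theta+1\rceil$ satisfies both simultaneously.

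I expect no serious obstacle in any part, since the deep analytic content sits in the cited theorems. The only step demanding genuine care is the monotonicity argument in part (ii): one must verify strict monotonicity separately on each side of $\alpha_2$, check continuity across $\alpha_2$, and confirm via the normalization of $\rho_S$ that $\theta(\alpha_3)=1$ so the bijection is onto all of $[0,1]$, which is what makes the map invertible and hence forces the recovered limit to coincide with the prescribed $\nu$.
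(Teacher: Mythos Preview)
Your proposal is correct and follows essentially the same route as the paper, which presents the theorem as an immediate corollary of Theorem~\ref{lem1}, Theorem~\ref{theo2}, and the asymptotic results \eqref{nu}--\eqref{Sden} imported from \cite{MR1928260}, with only the one-line observation that $j_k/k\to\theta$ forces the fraction of zeros in $(\alpha_1,\alpha_2)$ to tend to $\theta$. Your treatment of part~(ii), verifying that $\nu\mapsto\theta(\nu)$ is a strictly increasing bijection of $[\alpha_1,\alpha_3]$ onto $[0,1]$, is more explicit than anything the paper writes down but is entirely consistent with its tacit use of invertibility.
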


We may also calculate conditions under which the limiting distribution of Stieltjes zeros is supported throughout $(\alpha_1, \alpha_3)$.  A necessary condition  given by Theorem \ref{lem1} is that $\nu =\alpha_2$, and \eqref{Sden} tells us that this is also a sufficient condition.  
In order for the limiting distribution to be supported on $(\alpha_1, \alpha_3)$, the fraction of the zeros of $S_{j_k}^{(k)}$ in $(\alpha_1, \alpha_2)$ must tend to  $\theta_c$, where $\theta_c$ is calculated by setting $\nu=\alpha_2$ in \eqref{nu}.  A simple calculation shows that this is (see also \cite[Prop. 1]{MR1928260}):
\beq
\theta_c = \frac{2}{\pi} \sin^{-1}\sqrt{\frac{\alpha_2 - \alpha_1}{\alpha_3 - \alpha_1}}.
\eeq

One remaining question is whether there exist sequences of orthogonal Stieltjes polynomials.  
Orthogonality is a rather strict condition on sequences of polynomials, but, as we mentioned in the Introduction, there are compelling reasons to suspect that some sequences of Stieltjes polynomials might be orthogonal.  
It is well known (see, e.g. \cite{sz75}) that if a sequence $\{p_n\}$ of monic polynomials of degree $n$ is orthogonal with respect to some measure, then the zeros of $p_n$ and $p_{n+1}$ interlace.  Moreover, $\{p_n\}$ is orthogonal if and only if there exist sequences $\{a_n\}$ and $\{b_n\}$, with $a_n\in\mathbb{R}$ and $b_n>0$ such that
$p_n= (x-a_n)p_{n-1} - b_np_{n-2}.
$
We have found no simple contradiction to emerge from assuming that this holds for sequences of Stieltjes polynomials.
Additionally, if $a_n\rightarrow a\in\mathbb{R}$ and $b_n\rightarrow b\in (0,\infty)$, then according to Theorem 5.3 of \cite{ne79}, the polynomials $p_n$ have the asymptotic zero distribution $\omega_{[\alpha, \beta]}$ with density supported on the interval $[\alpha, \beta]$ given by
\beq
\frac{d\omega_{[\alpha, \beta]}(x)}{dx} =
\frac{1}{\pi\sqrt{(\beta-x)(x-\alpha)}},
\label{opden}
\eeq
where $\alpha=a-2/b$ and $\beta=a+2/b$.

Interestingly, the density \eqref{opden} is identical to \eqref{Sden} when $\nu$ in \eqref{Sden} is one of $\alpha_1, \alpha_2, \alpha_3$.  Thus, if $\theta = 0, 1$ or $\theta_c$ the sequence 
 $\left\{ S_{\lceil k\theta + 1\rceil}^{(k)}\right\}$ is an infinite sequence of polynomials with interlacing zeros  and asymptotic zero distribution supported in an open interval and given by a density of the form \eqref{opden}.  These sequences thus share many properties of orthogonal polynomial sequences.  However, despite these commonalities, we have the following result.
 
 \begin{theo}
 If $p=3$, then given any measure $\omega$ supported on $(\alpha_1, \alpha_3)$, there is no sequence of Stieltjes polynomials orthogonal with respect to $\omega$.
 \end{theo}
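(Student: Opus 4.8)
The plan is to assume, for contradiction, that some sequence of monic Stieltjes polynomials $P_k:=S_{j_k}^{(k)}$ is orthogonal with respect to a positive measure $\omega$ on $(\alpha_1,\alpha_3)$, and to extract from this a three‑term recurrence that is incompatible with the Lam\'e equation. First I would record two reductions. Since the zeros of orthogonal polynomials of consecutive degree interlace, Theorem~\ref{theo2} forces $j_{k+1}\in\{j_k,j_k+1\}$ for every $k$; in particular the zeros of $P_{k-1}$ and of $P_{k-2}$ interlace. Second, by the Favard characterization quoted above, orthogonality is equivalent to a recurrence
\[
P_k=(x-a_k)P_{k-1}-b_kP_{k-2},\qquad a_k\in\R,\ b_k>0 .
\]

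The heart of the argument is to play this recurrence against the differential equations \eqref{lame2}. Writing $\mathcal L_m=A\,\tfrac{d^2}{dx^2}+B\,\tfrac{d}{dx}-\mu_m\bigl(x-\nu_{j_m}^{(m)}\bigr)$ with $B=A\sum_i\rho_i/(x-\alpha_i)$, so that $\mathcal L_m[P_m]=0$, I would apply $\mathcal L_k$ to the recurrence and use $\mathcal L_k[P_{k-1}]\propto P_{k-1}$ together with $\mathcal L_k[P_{k-2}]=(\mu_{k-2}-\mu_k)(x-\hat\nu_k)P_{k-2}$, where $\hat\nu_k=(\mu_k\,\nu_{j_k}^{(k)}-\mu_{k-2}\,\nu_{j_{k-2}}^{(k-2)})/(\mu_k-\mu_{k-2})$. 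Evaluating the resulting polynomial identity at a zero $\xi$ of $P_{k-1}$ kills every term carrying a factor $P_{k-1}$ and leaves
\[
2A(\xi)\,P_{k-1}'(\xi)=b_k\,(\mu_{k-2}-\mu_k)\,(\xi-\hat\nu_k)\,P_{k-2}(\xi).
\]
Thus the single number $b_k$ is pinned, at each of the $k-1$ zeros of $P_{k-1}$, to $2A(\xi)P_{k-1}'(\xi)\big/\bigl[(\mu_{k-2}-\mu_k)(\xi-\hat\nu_k)P_{k-2}(\xi)\bigr]$, and these must all agree.

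Next I would analyze the sign of this expression. At the zeros of $P_{k-1}$ the interlacing with $P_{k-2}$ gives $\operatorname{sign}P_{k-1}'(\xi)=\operatorname{sign}P_{k-2}(\xi)$, so that ratio is positive, and since $\mu_{k-2}-\mu_k<0$ one finds $\operatorname{sign}b_k=-\operatorname{sign}\!\bigl[A(\xi)(\xi-\hat\nu_k)\bigr]$ at every zero $\xi$. Because $A$ changes sign only at $\alpha_2$ and $x-\hat\nu_k$ only at $\hat\nu_k$, the quantity $A(\xi)(\xi-\hat\nu_k)$ keeps a constant sign over all zeros of $P_{k-1}$ \emph{precisely} when $\alpha_2$ and $\hat\nu_k$ lie in the same gap between consecutive zeros of $P_{k-1}$. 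Consequently, the recurrence, and hence orthogonality, is contradicted as soon as, for a single $k$, some zero of $P_{k-1}$ lies strictly between $\alpha_2$ and $\hat\nu_k$ (equivalently, $\hat\nu_k$ falls below a zero in $(\alpha_1,\alpha_2)$ or above a zero in $(\alpha_2,\alpha_3)$): then $b_k>0$ would be forced to take both signs.

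The main obstacle is therefore to locate $\hat\nu_k$ relative to the zeros of $P_{k-1}$ near $\alpha_2$. Here I would exploit that $\hat\nu_k$ is the convex combination
\[
\hat\nu_k=\frac{(\mu_k-\mu_{k-1})\tilde\nu_k+(\mu_{k-1}-\mu_{k-2})\tilde\nu_{k-1}}{\mu_k-\mu_{k-2}}
\]
of the two consecutive‑degree points $\tilde\nu_m=(\mu_m\nu_{j_m}^{(m)}-\mu_{m-1}\nu_{j_{m-1}}^{(m-1)})/(\mu_m-\mu_{m-1})$, each of which lies in $(\alpha_1,\alpha_2)$ or in $(\alpha_2,\alpha_3)$ according to whether $j_m=j_{m-1}$ or $j_m=j_{m-1}+1$, by the lemma used in the proof of Theorem~\ref{theo2}. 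Combining this with Theorem~\ref{lem1} (no zero of $P_{k-1}$ lies between $\alpha_2$ and its Van Vleck zero $\nu_{j_{k-1}}^{(k-1)}$) and the Van Vleck interlacing \eqref{inequality}, I expect to show that $\hat\nu_k$ escapes the shrinking gap of $P_{k-1}$ that brackets $\alpha_2$. The delicate point, and the step I expect to require a quantitative estimate on the rate at which $\nu_{j_k}^{(k)}$ approaches its limit, is exactly this comparison of the position of $\hat\nu_k$ with the gap width; establishing it for even one admissible $k$ along the (already constrained) sequence completes the contradiction.
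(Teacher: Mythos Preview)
Your approach is entirely different from the paper's, and the crucial step you yourself flag as ``delicate'' is genuinely incomplete.

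The paper does not touch the three-term recurrence at all. Instead it invokes a result of Volkmer: for any sequence $\{S^{(n)}\}$ of Stieltjes polynomials, the two-variable products $S^{(n)}(x_1)S^{(n)}(x_2)$ are orthogonal on $(\alpha_1,\alpha_2)\times(\alpha_2,\alpha_3)$ with respect to an explicit measure $\nu$. This $\nu$ is compactly supported, hence determinate (its moments fix it). If the $S^{(n)}$ were orthogonal on $(\alpha_1,\alpha_3)$ with respect to some $\omega$, then by Fubini the products would also be orthogonal with respect to the product measure $\omega\times\omega$. Uniqueness forces $\nu=\omega\times\omega$, but $\nu$ is visibly not a product measure. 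That is the whole argument; it is short and it closes. It is worth noting that the authors remark, just before the theorem, that they ``found no simple contradiction to emerge'' from the three-term recurrence itself, which is exactly the route you are attempting.

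On your side, the algebraic reduction is correct: applying $\mathcal L_k$ to the recurrence and evaluating at a zero $\xi$ of $P_{k-1}$ does give $2A(\xi)P_{k-1}'(\xi)=b_k(\mu_{k-2}-\mu_k)(\xi-\hat\nu_k)P_{k-2}(\xi)$, and the interlacing of $P_{k-1}$ with $P_{k-2}$ does force $P_{k-1}'(\xi)/P_{k-2}(\xi)>0$. So the sign of $b_k$ is indeed $-\mathrm{sign}\,A(\xi)(\xi-\hat\nu_k)$, and a contradiction follows once some zero of $P_{k-1}$ lies strictly between $\alpha_2$ and $\hat\nu_k$. But you do not prove this. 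The convex-combination identity for $\hat\nu_k$ and Lemma~3 only tell you that each $\tilde\nu_m$ lies in $(\alpha_1,\alpha_2)$ or $(\alpha_2,\alpha_3)$; they give no control on where $\hat\nu_k$ sits relative to the gap of $P_{k-1}$ that straddles $\alpha_2$. In the extreme case $j_k\equiv 1$, all zeros of $P_{k-1}$ lie in $(\alpha_2,\alpha_3)$ and the ``$\alpha_2$-gap'' extends all the way down to $\alpha_1$; your sign argument then requires $\hat\nu_k$ to exceed the smallest zero of $P_{k-1}$ in $(\alpha_2,\alpha_3)$, and nothing you have written forces that. The ``quantitative estimate'' you anticipate is not a detail but the entire content of the proof, and there is no indication in the paper (or elsewhere) that such an estimate is available. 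As it stands, the proposal is a plausible outline with the decisive step missing.
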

% \begin{theo} The Stieltjes polynomials are not orthogonal. More precisely, for any sequence $\{S^{(k)}\}_{k=1}^\infty$ of
%Stieltjes polynomials, there exists no measure $\mu$ supported on $(\alpha_1,\alpha_3)$ for which the
%$S^{(k)}$'s are orthogonal.
%\end{theo}

\begin{proof} 
Our argument is based on the uniqueness of the orthogonal measure for products of
Stieltjes polynomials.  Let $\{S^{(n)}\}_{n=1}^{\infty}$ be a sequence of Stieltjes polynomials.  In \cite{vo99}, Volkmer shows that for $n \neq m$, the two variables polynomials
$\prod_{k=1}^2 S^{(n)}(x_k)$ and $\prod_{k=1}^2 S^{(m)}(x_k)$ are orthogonal with respect to the measure
\beq 
\mu(x_1,x_2)= \prod_{j=1}^3 \prod_{k=1}^2 |x_k-\alpha_j|^{\rho_j-1} (x_2-x_1) 
\eeq
on the open rectangle $(\alpha_1,\alpha_2) \times (\alpha_2,\alpha_3)$. 
%By introducing the cutoff functions
%\beq 
%\psi(x) = \begin{cases}
%                1 & \text{ if } x \in (\alpha_1,\alpha_2)\\
%                0 & \text { if } x \in (\alpha_2, \alpha_3)
%             \end{cases}
%\quad \text{ and }
%   \phi(x) = \begin{cases}
%                0 & \text{ if } x \in (\alpha_1,\alpha_2)\\
%                1 & \text { if } x \in (\alpha_2, \alpha_3)
%             \end{cases} 
%\eeq
The measure can easily be extended to the bigger rectangle $R=(\alpha_1,\alpha_3) \times (\alpha_1,\alpha_3)$ by letting
\beq
%\nu(x_1,x_2) = \psi(x_1) \phi(x_2) \mu(x_1,x_2). 
\nu(x_1, x_2) = \chi_{[\alpha_1, \alpha_2]}(x_1) \chi_{[\alpha_2, \alpha_3]}(x_2) \mu( x_1, x_2),
\eeq
where $\chi_E(x)$ is the characteristic function of $E$.

Since the condition
\[ \int_{R} e^{x_1^2+x_2^2}  \ d\nu(x_1,x_2) < \infty \]
holds, Theorem 3.1.17 in \cite{duxu01} implies that $\nu$ is deterministic, i.e. $\nu$ is uniquely determined by its
moments. It then follows, that $\nu$ is the unique measure on $R$ for which $\prod_{k=1}^2 S^{(n)}(x_k)$ and
$\prod_{k=1}^2 S^{(m)}(x_k)$ are orthogonal.

It is now easy to establish that Stieltjes polynomials are not orthogonal on $(\alpha_1,\alpha_3)$ with respect
to any measure. Assume the contrary, i.e. assume there exists a measure $\omega$ with support in
$(\alpha_1,\alpha_3)$ such that
\beq 
\int_{\alpha_1}^{\alpha_3} S^{(n)}(x) S^{(m)}(x) \ d\omega(x) = 0 \ \text{ whenever } n \neq m. 
\eeq
By Fubini's Theorem, the products $\prod_{k=1}^2 S^{(n)}(x_k)$ and $\prod_{k=1}^2 S^{(m)}(x_k)$ are then
orthogonal on $R$ with respect to the product measure $\omega(x_1) \times \omega(x_2)$. But clearly,
\beq 
\nu(x_1,x_2) \neq \omega(x_1) \times \omega(x_2) 
\eeq
as $\nu$ cannot be written as the product of a measure in $x_1$ with a measure in $x_2$. This contradicts the
uniqueness of $\nu$ and proves the theorem. 
\end{proof}

\section{Concluding remarks and open questions}
\label{s4}

It is interesting that the asymptotic properties of Stieltjes and Van Vleck polynomials are independent of the fixed charges $\rho_j$, and depend only on the locations $\alpha_j$ of the fixed charges.  Thus, for any configuration of the $\alpha_j$'s, there is a three parameter  $(\rho_1, \rho_2, \rho_3)$ family of Stieltjes polynomials with the same asymptotic properties.  Of course, for finite $k$ the zeros of $S_j^{(k)}$ depend on the $\rho_j$'s, and so our  results are a way to connect the classical results on Stieltjes and Van Vleck polynomials to the more recent work done on the asymptotic properties of these functions.  This allows us to present a fairly complete description in the case we have considered of real $\alpha_j$'s and $p=3$.

 An obvious way to generalize the results we have presented would be to consider the case of more general $p$.  In this case the Van Vleck polynomials are of degree $p-2$.  Hence one question that arises is how to order the Van Vleck and Stieltjes polynomials in a manner analogous to the ordering defined in \eqref{lame2}.  It may be that there is no generally consistent ordering as there is for $p=3$.  However, it still may be possible to generalize some of the results of this paper.

We conclude by noting that an important conjecture in quantum chaos due to M. V. Berry and M. Tabor asserts that for generic quantum integrable systems, the mean level spacing should exhibit a random distribution in the semi-classical limit. It is also believed that a similar behavior holds for the zeros of the corresponding eigenfunctions. An interesting application of our interlacing results may consist of computing the mean level spacing of the quantum integrable systems mentioned in the introduction, and see if the Berry-Tabor conjecture holds in these cases.

%\vskip12pt \noindent {\bf Acknowledgements:}

%\bibliography{biblio}

\end{document}